\documentclass[11pt,reqno]{amsart}\usepackage[letterpaper, portrait, margin=.9in, headheight=0pt]{geometry}
\usepackage{mathtools}
\usepackage{enumerate}
\usepackage{amsmath,amsthm,amssymb,amsfonts}
\usepackage{stmaryrd}
\usepackage{hyperref}
\usepackage[normalem]{ulem}
\usepackage{soul}
\usepackage{xcolor}
\usepackage{tikz}
\usepackage{tikz-cd}
\usepackage{float}
\usetikzlibrary{calc, positioning, fit, shapes.misc,arrows,bending, decorations.pathmorphing}
\usepackage{subcaption}
\usepackage{blkarray}
\usepackage{cleveref}
\usepackage{transparent}
\usepackage{graphicx}
\newtheorem{thm}{Theorem} 
\theoremstyle{definition}
\newtheorem{exam}[thm]{Example}
\newtheorem{defi}[thm]{Definition}
\newtheorem{lem}[thm]{Lemma}

\newtheorem{conj}[thm]{Conjecture}
\newtheorem{prop}[thm]{Proposition}
\newtheorem{ques}[thm]{Question}

\usepackage{titlesec}

\titleformat{\section}
       {\normalfont
       \fontsize{14}{17}\bfseries}{\thesection}{1em}{}

\titleformat{\subsection}
       {\normalfont\fontsize{12}{17}\bfseries}{\thesubsection}{1em}{}

\title{Proper Additive Edge Colorings of Regular Graphs}

\begin{document}


\maketitle

\begin{center}
IAN GOSSETT\\

\end{center}
\vspace{.2cm}

\begin{abstract} 

We show that if $G$ is a $d$-regular Vizing-class-1 graph, then the proper additive chromatic index of $G$, denoted $\eta'_p(G)$, is equal to its chromatic index. This verifies that a strengthening of the Additive Coloring Conjecture of Czerwi\'{n}ski et al. holds for line graphs of $d$-regular Vizing-class-1 graphs. We show that if $G$ is a $d$-regular Vizing-class-2 graph, $\eta'_{p}(G)\leq \frac{(2^{\lceil \log_2 (d+1)\rceil})^2+2}{3}$, and if $G$ is a $d$-regular Vizing-class-2 graph that admits a proper edge coloring with a smallest color class of size $r$ and $\text{girth}(G)\geq 6r-5$, then $\eta_p'(G)\leq 2d$, among other results. 
        
\end{abstract}

\section{Introduction}\label{sec:intro}

All graphs considered in this article are assumed to be finite, have no self-adjacent vertices, and no parallel edges. Let $\mathbb{N}$ denote the set of positive integers. An \textit{additive coloring} of a graph $G$, first studied in  \cite{MR2552893} (and originally called a \textit{lucky labeling}), is a function $c:V(G)\rightarrow \mathbb{N}$ such that whenever $v$ and $w$ are adjacent vertices in $G$, $\sum_{u\in N(v)}c(u)\neq \sum_{u \in N(w)}c(u)$.  If, in addition, $c$ is also a proper coloring of $G$, then $c$ is called a \textit{proper additive coloring} of $G$. The \textit{additive chromatic number} of a graph $G$, denoted $\eta (G)$, is the least integer $k$ such that the vertices of $G$ can be additively colored using labels from the set $\{1,2,\ldots,k\}$, and the \textit{proper additive chromatic number} of $G$, denoted, $\eta_p(G)$ is the least $k$ such that there exists a proper additive coloring of $G$ using labels from the set $\{1,2,\dots,k\}$.

The edge coloring analog of an additive coloring is called an \textit{additive edge coloring}. An additive edge coloring is a function $c:E(G)\rightarrow \mathbb{N}$ such that for any two incident edges $e_1$ and $e_2$,  $\sum_{e\in N'(e_1)}c(e)\neq \sum_{e\in N'(e_2)}c(e)$. (Here, $N'(e)$ denotes the set of edges incident to $e$ in $G$.) Equivalently, one can define an additive edge coloring of $G$ to be an additive coloring of its line graph, $L(G)$. A \textit{proper additive edge coloring} is an additive edge coloring that is also a proper edge coloring. The \textit{additive chromatic index} of $G$, denoted $\eta'(G)$, is the least $k$ such that there exists an additive edge coloring of $G$ using labels from the set $\{1,2,\dots,k\}$.  The \textit{proper additive chromatic index} of $G$, denoted, $\eta'_p(G)$ is the least $k$ such that there exists a proper additive edge coloring of $G$ using labels from the set $\{1,2,\dots,k\}$.

Letting $\chi(G)$ and $\chi'(G)$ denote the traditional chromatic number and chromatic index, respectively, the following inequalities are immediate from the above definitions:
$\eta(G)\leq \eta_p(G),$
$\chi(G)\leq \eta_p(G),$
$\chi'(G)\leq \eta_p '(G),$
$\eta'(G)\leq \eta_p '(G).$ And in \cite{MR2552893}, Czerwiński et al. made the following well-known conjecture regarding additive colorings.

\begin{conj}\label{conj:ACC} \cite{MR2552893} For all graphs $G$, $\eta(G)\leq \chi (G)$.
\end{conj}

Conjecture \ref{conj:ACC} is known as the \textit{Additive Coloring Conjecture}. This conjecture remains open, but some progress has been made -- for example, it was shown in \cite{MR3248491} that if $G$ is planar, $\eta(G)\leq 468$. (An improvement from $\eta(G)\leq 100 280 245 065$ in \cite{MR2552893}, but still quite far from $\eta(G)\leq \chi(G)\leq 4$, which must hold if the additive coloring conjecture is true.)  Further significant results regarding the additive chromatic number of planar graphs can be found in  \cite{MR2876224},\cite{MR3248491},\cite{MR4089840}, and \cite{MR4462999}. Progress for some other specific types of graphs can be found in \cite{MR4074150}. 

There is still plenty of room for progress on the Additive Coloring Conjecture. For example, it appears to be unknown whether there is a constant $k$ such that for all graphs $G$, $\eta(G)\leq k\chi(G)$, or even whether there is a function $f:\mathbb{N}\rightarrow \mathbb{R}$ such that for all graphs $G$, $\eta(G)\leq f(\chi(G))$.  

In \cite{MR2729020}, a similar but different notion to the additive chromatic number called the \text{sigma chromatic number} was introduced, and it is important not to get the two confused. It is also worth noting that some researchers have focused on list-coloring generalizations of additive colorings, e.g. \cite{MR4089840},\cite{MR4748249}, and \cite{MR4462999}, and methods that are applied to traditional list-colorings have been used with only partial success.

In this article, we focus on proper additive edge colorings of $d$-regular graphs. By a well-known theorem of Vizing \cite{MR0180505},  if $G$ is a $d$-regular graph, then $\chi ' (G)=d$ or $\chi ' (G) = d +1$. A $d$-regular graph $G$ is called \textit{Vizing-class-1} if $\chi ' (G)=d$ and is called \textit{Vizing-class-2} graph if $\chi ' (G) =d+1$. In Theorem \ref{thm:class1add}, we show that $\eta_p'(G)\leq \chi'(G)$ whenever $G$ is a $d$-regular Vizing-class-1 graph, thereby verifying the Additive Coloring Conjecture holds for line graphs of $d$-regular Vizing-class-1 graphs. In Theorem \ref{thm:addbound}, we show that if $G$ is a $d$-regular Vizing-class-2 graph, then $\eta'_{p}(G)\leq \frac{(2^{\lceil \log_2 (d+1)\rceil})^2+2}{3}$. In Theorem \ref{thm:girth}, we show that if $G$ is a Vizing-class-2 graph that admits a proper edge coloring with a smallest color class of size $r$ and girth at least $6r-5$, then $\eta_p'(G)\leq 2d$. Other related bounds for classes of $d$-regular Vizing-class-2 graphs are also proven in Section \ref{sec:results}. Throughout this work, we assume familiarity with standard graph theory notation and concepts that can be found in many introductory graph theory texts (e.g.,\cite{MR4874150}, \cite{west_introduction_2000}).

\section{Additive Edge Colorings of $\mathbf{d}$-Regular Graphs}\label{sec:results} 

\begin{thm}\label{thm:class1add} If $G$ is a $d$-regular Vizing-class-1 graph, then every proper edge coloring of $G$ is an additive edge coloring of $G$. Therefore, $\eta_{p}'(G)=\chi ' (G)=d$, and the Additive Coloring Conjecture holds for line graphs of $d$-regular Vizing-class-1 graphs. 
\end{thm}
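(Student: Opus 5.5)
Let $c:E(G)\to\{1,\dots,d\}$ be any proper edge coloring of the $d$-regular graph $G$. Since each vertex has degree $d$ and its $d$ incident edges receive distinct colors from a set of size $d$, every vertex sees each color in $\{1,\dots,d\}$ exactly once. So for every vertex $v$,
\[
\sum_{e\ni v} c(e) = 1+2+\cdots+d = \frac{d(d+1)}{2} =: S.
\]
The plan is to compute the additive sums $\sum_{f\in N'(e)} c(f)$ explicitly from this identity and then compare the sums of two incident edges.

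For an edge $e=uv$, the set $N'(e)$ consists of the edges at $u$ other than $e$ together with the edges at $v$ other than $e$. These two sets are disjoint because $G$ has no parallel edges. Hence
\[
\sum_{f\in N'(e)} c(f) = \bigl(S - c(e)\bigr) + \bigl(S - c(e)\bigr) = d(d+1) - 2c(e).
\]
Now take incident edges $e_1,e_2$. They share an endpoint, and $c$ is proper, so $c(e_1)\neq c(e_2)$. Therefore $d(d+1)-2c(e_1)\neq d(d+1)-2c(e_2)$, and $c$ is an additive edge coloring. This proves the first claim.

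For the consequences, note that $G$ is class 1, so a proper edge coloring with colors $\{1,\dots,d\}$ exists. By the first part it is a proper additive edge coloring, which gives $\eta_p'(G)\le d$. Combined with $\chi'(G)\le\eta_p'(G)$, this yields $\eta_p'(G)=\chi'(G)=d$. Finally, $\eta(L(G))=\eta'(G)\le\eta_p'(G)=\chi'(G)=\chi(L(G))$, which is the Additive Coloring Conjecture for $L(G)$.

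No step is a real obstacle. The only points needing care are that the colors at each vertex form exactly $\{1,\dots,d\}$, which uses $d$-regularity together with the class-1 palette, and that the two neighborhoods at $u$ and $v$ do not overlap.
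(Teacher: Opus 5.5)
Your proposal is correct and follows essentially the same argument as the paper. You use that each vertex sees every color in $\{1,\dots,d\}$ exactly once, so the neighborhood sum of $e$ is $d(d+1)-2c(e)$ and properness separates incident edges; the paper writes the same computation as a difference of the two neighborhood sums.
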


\begin{proof} Suppose that $c:E(G)\rightarrow \{1,2,\ldots,d\}$ is a proper edge coloring of $G$. For each $v\in V(G)$, define $\Gamma(v)=\{e\in E(G):v \text{ is an endpoint of } e\}$. Note that for each $vw\in E(G)$, we have 

\begin{align*}\sum _{e\in N'(vw)}c(e)&=\left(\sum _{e\in \Gamma(v)}c(e)-c(vw)\right)+\left(\sum_{e\in \Gamma(w)}c(e)-c(vw)\right)\\
&=\sum _{e\in \Gamma(v)}c(e)+\sum_{e\in \Gamma(w)}c(e)-2c(vw)\\
\end{align*}

Thus, if $vw$, $vx$ are incident edges of $G$, then 

\begin{align*}&\sum _{e\in N'(vw)}c(e)-\sum_{e\in N'(vx)}c(e)\\
=& \left( \sum _{e\in \Gamma(v)}c(e)+\sum_{e\in \Gamma(w)}c(e)-2c(vw)\right)\\
& \hspace{3cm}-\left( \sum _{e\in \Gamma(v)}c(e)+\sum_{e\in \Gamma(x)}c(e)-2c(vx)\right)\\
&=\sum_{e\in \Gamma(w)}c(e)-\sum_{e\in \Gamma(x)}c(e)-(2c(vw)-2c(vx))\\
&=\sum_{i=1}^{d}i-\sum_{i=1}^{d}i-2(c(vw)-c(vx))\\
&=-2(c(vw)-c(vx))\neq 0,
\end{align*}

where the final inequality holds because $c$ is a proper edge coloring. Hence, $c$ is an additive edge coloring of $G$, and $\eta_p'(G)= d$. 
\end{proof}

For $d$-regular Vizing-class-2 graphs, determining $\eta'_p(G)$ seems to be more difficult. The remainder of this work is dedicated to proving upper bounds for $\eta'_{p}(G)$ for $d$-regular Vizing-class-2 graphs.

\begin{defi} For $A\subseteq \mathbb{Q}$, define $A^-=\{x-y:x,y\in A \text{ and } x\neq y\}$, and for each $q\in \mathbb{Q}$, define $qA=\{qz:z\in A\}$, and define $A+q=\{z+q:z\in A\}$.\end{defi}

\begin{prop} \label{prop:Acap2A}Suppose that $G$ is a $d$-regular graph. If $A$ is a set of $d+1$ positive integers such that $A^-\cap 2A^-=\emptyset,$ then every proper edge coloring $c:E(G)\rightarrow A$ is an additive edge coloring of $G$.\end{prop}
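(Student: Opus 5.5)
We follow the computation in the proof of Theorem \ref{thm:class1add}. For a proper edge coloring $c:E(G)\rightarrow A$ and incident edges $vw$, $vx$, the same algebra gives
\begin{align*}
\sum_{e\in N'(vw)}c(e)-\sum_{e\in N'(vx)}c(e)=\sum_{e\in\Gamma(w)}c(e)-\sum_{e\in\Gamma(x)}c(e)-2\bigl(c(vw)-c(vx)\bigr).
\end{align*}
The new feature compared with the class-1 case is that the two vertex sums need no longer agree. We therefore identify them using the hypothesis $|A|=d+1$.

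Since $G$ is $d$-regular and $c$ is proper, the $d$ edges at any vertex $u$ receive $d$ distinct colors from $A$. So they use every color of $A$ except exactly one, which we call $m(u)$. Writing $S=\sum_{a\in A}a$, we get $\sum_{e\in\Gamma(u)}c(e)=S-m(u)$. Substituting, the difference becomes
\begin{align*}
m(x)-m(w)-2\bigl(c(vw)-c(vx)\bigr).
\end{align*}

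Suppose, for contradiction, that this difference is $0$, so $m(x)-m(w)=2\bigl(c(vw)-c(vx)\bigr)$. Because $c$ is proper and $vw,vx$ share the vertex $v$, we have $c(vw)\neq c(vx)$, both in $A$. Hence the right-hand side lies in $2A^-$ and in particular is nonzero. It follows that $m(x)\neq m(w)$, and since both lie in $A$, the left-hand side lies in $A^-$. This puts one number in $A^-\cap 2A^-$, contradicting the assumption that this intersection is empty. So the neighborhood sums of $vw$ and $vx$ always differ, and $c$ is an additive edge coloring.

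The only step requiring care is the observation that each vertex misses exactly one color of $A$. The rest is the bookkeeping already done in Theorem \ref{thm:class1add}, together with checking that nonzero-ness of the right side forces $m(x)\neq m(w)$, so that the left side genuinely lands in $A^-$ as defined (differences of distinct elements).
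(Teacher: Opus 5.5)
Your proposal is correct and follows the paper's proof essentially step for step. Both use the same identity reducing the difference to $a_x-a_w-2(c(vw)-c(vx))$ via the unique missing color at each vertex. Your argument by contradiction (the right side is nonzero, which forces $m(x)\neq m(w)$) is just a rephrasing of the paper's two cases $a_w=a_x$ and $a_w\neq a_x$.
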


\begin{proof} Let $c:E(G)\rightarrow A$ be a proper edge coloring of $G$ and for each $v\in V$, define $a_v$ to be the unique element of $A$ that does not appear in $c(\Gamma(v))$.  Similarly to the proof of \ref{thm:class1add}, if $vw$ and $vx$ are incident edges in $G$, we have

\begin{align*}
&\sum _{e\in N'(vw)}c(e)-\sum_{e\in N'(vx)}c(e)\\
=&\sum_{e\in \Gamma(w)}c(e)-\sum_{e\in \Gamma(x)}c(e)-2(c(vw)-c(vx))\\
=&\left(\sum_{a\in A}a-a_w\right)-\left(\sum_{a\in A}a-a_x\right)-2(c(vw)-c(vx))\\
=& (a_x-a_w)-2(c(vw)-c(vx)).\\
\end{align*}

\noindent If $a_w=a_x$, then  \((a_x-a_w)- 2(c(vw)-c(vx))=- 2(c(vw)-c(vx))\neq 0\). If $a_w\neq a_x$, then $(a_x-a_w)\in A^{-}$ and $2(c(vw)-c(vx))\in 2A^{-}$, and since 
$A^-\cap 2A^-=\emptyset$,  \((a_x-a_w)- 2(c(vw)-c(vx))\neq 0\). Hence $c$ is an additive edge coloring of $G$. 
\end{proof}

\begin{thm} \label{thm:addbound}Suppose that $G$ is a $d$-regular Vizing-class-2 graph. Then \[\eta'_{p}(G)\leq \frac{(2^{\lceil \log_2 (d+1)\rceil})^2+2}{3}=\frac{(2^{\lceil \log_2 \chi'(G)\rceil})^2+2}{3},\] where $\lceil x \rceil$ denotes the least integer $\geq x$. In particular, if $d+1=2^n$ for some $n>0$, then \[\eta'_{p}(G)\leq \frac{(d+1)^2+2}{3}=\frac{(\chi'(G))^2+2}{3}.\]\end{thm}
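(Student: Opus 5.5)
The plan is to apply Proposition \ref{prop:Acap2A}. By Vizing's theorem, $\chi'(G)=d+1$, so $G$ has a proper edge coloring with $d+1$ colors. If we can find a set $A$ of $d+1$ positive integers with $A^-\cap 2A^-=\emptyset$ and $\max A\le \frac{4^n+2}{3}$, where $n=\lceil \log_2(d+1)\rceil$, then we relabel the $d+1$ color classes bijectively by the elements of $A$. This gives a proper edge coloring $c:E(G)\to A$, which is additive by Proposition \ref{prop:Acap2A}. Since all labels lie in $\{1,\dots,\max A\}$, the bound follows. The case $d+1=2^n$ is then immediate.

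For the set, I would use the integers whose base-$4$ expansions use only the digits $0$ and $1$ (a Moser--de Bruijn type set). Define
\[B=\Bigl\{\textstyle\sum_{i\in S}4^i : S\subseteq\{0,1,\dots,n-1\}\Bigr\}.\]
Then $|B|=2^n\ge d+1$ and $\max B=\sum_{i=0}^{n-1}4^i=\frac{4^n-1}{3}$. Next put $A_0=B+1$, so that all elements are positive and $\max A_0=\frac{4^n+2}{3}$. Finally, let $A$ be any $(d+1)$-element subset of $A_0$. Two observations make the reductions harmless. First, translation does not change difference sets, so $A_0^-=B^-$. Second, passing to a subset only shrinks $A^-$ and $2A^-$. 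Hence it suffices to show $B^-\cap 2B^-=\emptyset$.

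The key step is this digit argument. Any element of $B^-$ has the form $\sum_{i<n}\varepsilon_i4^i$ with each $\varepsilon_i\in\{-1,0,1\}$ and not all $\varepsilon_i$ zero, because the two elements differ. Any element of $2B^-$ has the form $\sum_{i<n}2\delta_i4^i$ with $\delta_i\in\{-1,0,1\}$. Suppose the two were equal. Then
\[\sum_i(\varepsilon_i-2\delta_i)4^i=0,\qquad \text{with } |\varepsilon_i-2\delta_i|\le 3.\]
Let $j$ be the least index with a nonzero coefficient. Reducing modulo $4^{j+1}$ gives $4^j(\varepsilon_j-2\delta_j)\equiv 0$, which forces $4\mid(\varepsilon_j-2\delta_j)$. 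This is impossible for a nonzero integer of absolute value at most $3$. So every coefficient vanishes, that is, $\varepsilon_i=2\delta_i$ for all $i$. Since $\varepsilon_i\in\{-1,0,1\}$, this forces $\varepsilon_i=\delta_i=0$ for all $i$, contradicting that not all $\varepsilon_i$ are zero.

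The main thing to get right is the choice of construction, namely base $4$ with digits $\{0,1\}$ followed by a shift by $1$. Once that is fixed, the only real content is the non-divisibility argument above, and the rest is bookkeeping. I would also double-check the identity $\frac{4^n+2}{3}=\frac{(2^n)^2+2}{3}$ and that $\chi'(G)=d+1$, so the two displayed forms of the bound coincide.
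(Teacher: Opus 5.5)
Your proof is correct, and it is essentially the paper's proof. Since $\max(A_n)=\frac{4^n+2}{3}$, the shift $3\max(A_n)-2$ in Lemma \ref{prop:lemma} equals $4^n$. The recursion $A_{n+1}=A_n\cup(A_n+4^n)$ with $A_1=\{1,2\}$ therefore produces exactly your set $B+1$. Your reduction via Proposition \ref{prop:Acap2A}, relabelling a proper $(d+1)$-edge-coloring by a $(d+1)$-subset, is also the same as the paper's.

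The only real difference is how $A^-\cap 2A^-=\emptyset$ is verified. The paper inducts on $n$ using magnitudes alone:
\begin{itemize}
\item cross-differences between the two halves have absolute value at least $2\max(A_n)-1$, so their doubles are too large to lie in $A_{n+1}^-$;
\item doubled within-half differences have absolute value at most $2\max(A_n)-2$, so they miss the cross-differences;
\item the inductive hypothesis handles within-half differences against doubled within-half differences.
\end{itemize}
Your lowest-nonzero-digit argument is non-inductive and makes transparent why base $4$ is the right choice, namely $|\varepsilon_i-2\delta_i|\le 3<4$.
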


We will make use of the following lemma in the proof of Theorem \ref{thm:addbound}.
\begin{lem} \label{prop:lemma}Define $A_1=\{1,2\}$ and $A_{n+1}=A_n\cup (A_n+3\max(A_n)-2) \quad \text{for } n\geq 2$. For each $n\in \mathbb{N}$, $|A_n|=2^n$, $\max(A_n)=\frac{4^n+2}{3}$, and $A_n^{-}\cap 2A_n^{-}=\emptyset$.\end{lem}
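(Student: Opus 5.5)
Note first that the recursion as written says ``for $n\geq 2$'', but it must be meant for $n\geq 1$ to define $A_2$. I will treat it as $A_{n+1}=A_n\cup(A_n+t_n)$ with $t_n=3M_n-2$, where $M_n=\max(A_n)$, for all $n\geq 1$, and argue by induction on $n$. The base case $A_1=\{1,2\}$ is immediate: $|A_1|=2$, $\max(A_1)=2=(4+2)/3$, and $A_1^-=\{\pm1\}$, $2A_1^-=\{\pm 2\}$ are disjoint.

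For the first two claims, note that $\min(A_n)=1$ for all $n$, since $A_n\subseteq A_{n+1}$ and the translate has minimum $1+t_n>M_n$. Hence $A_n$ and $A_n+t_n$ are disjoint, because every element of the translate exceeds $t_n\geq M_n$ (as $M_n\geq 2$ gives $3M_n-2\geq M_n$). This gives $|A_{n+1}|=2|A_n|=2^{n+1}$. Also $M_{n+1}=M_n+t_n=4M_n-2$. Substituting $M_n=(4^n+2)/3$ yields $M_{n+1}=(4^{n+1}+8-6)/3=(4^{n+1}+2)/3$.

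For the disjointness claim, set $B=A_n$ and $D=B^-$. By induction, $D\cap 2D=\emptyset$, and every element of $D$ satisfies $|x|\leq M_n-1$. The differences of $A_{n+1}$ are $D\cup (D+t_n)\cup(D-t_n)\cup\{\pm t_n\}$, since cross differences are $(b+t_n)-b'=t_n+(b-b')$ with $b-b'\in D\cup\{0\}$. Write $E=A_{n+1}^-$ and suppose $x=2y$ with $x,y\in E$. We split by the ``level'' of $x$ and $y$. Elements of $D$ have absolute value at most $M_n-1$, while elements of $\pm(t_n+(D\cup\{0\}))$ have absolute value in $[t_n-(M_n-1),\,t_n+M_n-1]=[2M_n-1,\,4M_n-3]$. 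We then check the cases in turn:
\begin{itemize}
\item $y\in D$ and $x\in D$: this contradicts the inductive hypothesis.
\item $y\in D$ and $x$ in the far level: then $|x|=2|y|\leq 2M_n-2<2M_n-1$, which is impossible.
\item $y$ in the far level: then $|2y|\geq 4M_n-2>4M_n-3\geq |x|$, which is impossible whatever level $x$ lies in.
\end{itemize}
So $E\cap 2E=\emptyset$.

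The main obstacle is choosing the translate $t_n$ so that the two magnitude gaps line up: $2(M_n-1)<t_n-(M_n-1)$ and $2(t_n-(M_n-1))>t_n+(M_n-1)$. The second inequality is equivalent to $t_n>3M_n-3$, and $t_n=3M_n-2$ is exactly the smallest value satisfying it; the first inequality then holds automatically. I would verify these two inequalities carefully, since they are the entire content of the argument. The rest is bookkeeping, including the observation that differences involving $0\in D\cup\{0\}$ (that is, $\pm t_n$ itself) fall in the far level.
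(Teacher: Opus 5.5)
Your proof is correct and follows essentially the same route as the paper: induction on $n$, writing $A_{n+1}^-$ as $A_n^-$ together with the cross differences, and using the magnitude gaps $2(M_n-1)<2M_n-1$ and $2(2M_n-1)>M_{n+1}-1$ (you split this into cases according to which level $y$ and $x=2y$ lie in). You also correctly make explicit two points the paper leaves implicit: the recursion must start at $n\geq 1$, and $\min(A_n)=1$, which is needed for the disjointness of $A_n$ and its translate and for the magnitude bounds.
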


\begin{proof} That $|A_n|=2^n$ for each $n\in \mathbb{N}$ is clear, since $|A_1|=2$, and for each $n\geq 2$ we construct $A_n$ by taking the union of two disjoint sets of size $|A_{n-1}|$, thus acheiving $|A_n|=2|A_{n-1}|.$  We prove the other two parts by induction on $n$. 

We first show that, for each $n$, $\max(A_n)=\frac{4^n+2}{3}$. Note that $\max (A_1)=2=\frac{4^1+2}{3}$. Now, if $n\in \mathbb{N}$ and $\max(A_n)=\frac{4^n+2}{3}$, we have that \begin{align*}
\max(A_{n+1})&=\max(A_n)+3\max(A_n)-2\\
&=\frac{4^n+2}{3}+3\frac{4^n+2}{3}-2\\
&= \frac{4^{n+1}+2}{3},\\
\end{align*}
as needed to complete the inductive step. 

We now show that $A_n^{-}\cap 2A_n^{-}=\emptyset$ for each $n$. First, observe that $A_1^{-}=\{\pm 1\}$ and $2A_1^{-}=\{\pm 2\}$, so  $A_1^{-}\cap 2A_1^{-}=\emptyset$, and the base case holds. Now, fix $n\in \mathbb{N}$ and suppose $A_n^{-}\cap 2A_n^{-}=\emptyset$. 

Write
 \begin{align*}S_1 &= \{x-y: x\in A_n \text{ and } y\in (A_n+3\max(A_n)-2)\},\\
 S_2&= \{ x-y:y \in A_n \text{ and } x\in (A_n+3\max(A_n)-2)\},
 \end{align*}

and let $S=S_1\cup S_2$.

Then  \begin{align*}A^{-}_{n+1} &= \{x-y:x,y\in A_n \text{ and } x\neq y\} \cup \{x-y:x,y\in (A_n+3\max(A_n)-2)\text{ and } x\neq y\} \cup S\\
&= A^{-}_n\cup A^{-}_n\cup S \text{ (shifting $A_n$ does not change its set of differences)}\\
&= A^{-}_n\cup S.\\
\end{align*}

Suppose $w\in A_{n+1}^{-}.$ If $w\in S$, then \begin{align*}|w|&\geq\min(A_n+3\max(A_n)-2))-\max(A_n)\\
&=(1+3\max(A_n)-2)-\max(A_n)\\
&=2\max(A_n)-1.
\end{align*}Hence, \[|2w|\geq4\max(A_n)-2=\max(A_{n+1})>\max(A_{n+1})-1=\max\{|x|:x\in A^{-}_{n+1}\},\] 
and therefore, $2w\notin A^{-}_{n+1}$.

If $w\in A^{-}_n$, then since $\max\{|x|:x\in A_n^{-})\}=\max(A_n)-1$, we have \[|2w|\leq 2(\max(A_n)-1)<2\max(A_n)-1,\] and we saw above that any element of $S$ has magnitude at least $2\max(A_n)-1$, so $2w\notin S$. By the inductive hypothesis, we have that $2w\notin A^{-}_{n}$, so $2w\notin A^{-}_{n+1}=A_n^{-}\cup S$ in this case either. Therefore, if $w\in A_{n+1}^-$, then $2w\notin A_{n+1}^-$ and we conclude that $A^{-}_{n+1}\cap 2A^{-}_{n+1}=\emptyset$, as needed to complete the inductive step.

\end{proof}

\begin{proof}[Proof of Theorem \ref{thm:addbound}]
    By Proposition \ref{prop:Acap2A} and Lemma \ref{prop:lemma}, any proper edge coloring of $G$ with the first $d+1$ elements of $A_n$ as defined in Lemma \ref{prop:lemma}, where $n=\lceil\log_2(d+1)\rceil$ gives an additive edge coloring of $G$. Since \[\max(A_n)=\frac{4^n+2}{3}=\frac{(2^n)^2+2}{3}=\frac{(2^{\lceil \log_2 (d+1)\rceil})^2+2}{3},\] the claim follows. 
\end{proof}

For the special case where $H$ is the line graph of a $d$-regular graph, the bound given in \Cref{thm:addbound} is better than the following known bounds for arbitrary graphs $H$: $\eta(H)\leq k\Delta(H) +1$, where $k$ is the degeneracy of $H$ and $\Delta(H)$ is the maximum degree of $H$, proved in \cite{MR3478612}, and $\eta_p(H)\leq (\Delta(H)+1)^{\chi(H)-1}$, proved in \cite{MR2729020}. We now turn our attention to cases where we can further improve the bound for $\eta_p'(G)$.

 \begin{prop} \label{thm:oddzero} Suppose that $G$ is a $d$-regular Vizing-class-2 graph, and $c:E(G)\rightarrow \{1,2,\dots, d+1\}$ is a proper $(d+1)$-edge-coloring of $G$. For each $v\in V$, let $a_v$ denote the unique element of $\{1,2,\dots,d+1\}$ that does not appear at $v$ in $c$. If, for any path $v_1,v_2,v_3$ of length $2$ in $G$, either $a_{v_1}-a_{v_3}=0$ or $a_{v_1}-a_{v_3}$ is odd, then $c$ is an additive coloring.   Thus, if $G$ admits such a coloring, $\eta'_p(G)=d+1$.
\end{prop}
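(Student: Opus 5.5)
We reuse the computation from the proof of Proposition \ref{prop:Acap2A}, now with $A=\{1,2,\dots,d+1\}$. Take two incident edges $vw$ and $vx$ with $w\neq x$. Then $w,v,x$ is a path of length $2$ in $G$. Since every vertex misses exactly one color, $\sum_{e\in\Gamma(u)}c(e)=\sum_{i=1}^{d+1}i-a_u$ for every vertex $u$. The same algebra as before gives
\[\sum_{e\in N'(vw)}c(e)-\sum_{e\in N'(vx)}c(e)=(a_x-a_w)-2\bigl(c(vw)-c(vx)\bigr).\]
So it suffices to show that this quantity is nonzero whenever $c(vw)\neq c(vx)$, and $c(vw)\neq c(vx)$ holds because $c$ is proper.

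We split into two cases according to the hypothesis applied to the path $w,v,x$.

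\emph{Case $a_w=a_x$.} The difference is $-2(c(vw)-c(vx))$, which is nonzero.

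\emph{Case $a_x-a_w$ odd.} The difference is an odd integer minus an even integer. It is therefore odd, and in particular nonzero.

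In both cases the two neighborhood sums differ, so $c$ is an additive edge coloring. It is also proper and uses labels in $\{1,\dots,d+1\}$, so $\eta_p'(G)\leq d+1$. For the reverse inequality, since $G$ is Vizing-class-2 we have $d+1=\chi'(G)\leq\eta_p'(G)$. Hence $\eta_p'(G)=d+1$.

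There is no real obstacle here. The only points to check are that the hypothesis is applied to the path $w,v,x$, which is a genuine path because $w\neq x$, and that the parity argument covers exactly the case $a_w\neq a_x$ that Proposition \ref{prop:Acap2A} handled with the condition $A^-\cap 2A^-=\emptyset$.
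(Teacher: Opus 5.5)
Your proposal is correct and follows essentially the same argument as the paper: you reuse the identity $(a_x-a_w)-2\bigl(c(vw)-c(vx)\bigr)$ from Proposition \ref{prop:Acap2A}, apply the hypothesis to the path $w,v,x$, and show the difference is nonzero in both the equal case and the odd case. You also state the lower bound $\chi'(G)=d+1\leq\eta_p'(G)$ explicitly, which the paper leaves implicit.
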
\newpage 

\begin{proof} From the proof of Proposition \ref{prop:Acap2A}, if $vw$ and $vx$ are incident edges in $G$, we have

\begin{align*}
\sum _{e\in N'(vw)}c(e)-\sum_{e\in N'(vx)}c(e)
=& a_x-a_w-2(c(vw)-c(vx)).\\
\end{align*}

Since $x,v,w$ is a path of length $2$ between $x$ and $w$, we have that $a_{x}-a_{w}=0$ or $a_{x}-a_{w}$ is odd. In either case, $a_{x}-a_{w}-2(c(vw)-c(vx))\neq 0$, so $c$ is an additive coloring of $G$.  \end{proof}

\begin{defi}Let $e,e'\in E(G)$. We say that $e'$ is \textit{2-reachable} from $e$ if there exists a path of length two from an endpoint of $e$ to an endpoint of $e'$. 
\end{defi}

Note that if $e$ and $e'$ share an endpoint, then $e'$ is $2$-reachable from $e$. Note also that an edge is $2$-reachable from itself only if it is contained in some $3$-cycle of $G$. 

\begin{defi} Let $c:E(G)\rightarrow \{1,2,\dots,d+1\}$ be a proper edge coloring of a graph $G$. Call $c$ a \textit{spaced $(d+1)$ edge coloring} if, whenever $e,e'\in E(G)$ with $c(e)=c(e')=d+1$, $e'$ is not $2$-reachable from $e$. \end{defi}

\begin{thm}\label{thm:spaced}If $G$ is a $d$-regular Vizing-class-2 graph that admits a spaced $(d+1)$-edge-coloring, then $\eta_p'(G)\leq 2d=2\chi'(G)-2.$
\end{thm}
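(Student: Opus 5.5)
The plan is to take a spaced $(d+1)$-edge-coloring $c$ and relabel its colors so that a parity argument, as in Proposition \ref{thm:oddzero}, rules out every collision. Define $c'$ by $c'(e)=2c(e)$ if $c(e)\in\{1,\dots,d\}$ and $c'(e)=1$ if $c(e)=d+1$. The label set is $A=\{1,2,4,\dots,2d\}$, which has $d+1$ distinct elements and maximum $2d$. Since we only injectively relabel colors, $c'$ is still a proper edge coloring, and it remains to show it is additive.

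For each vertex $v$ let $a_v$ be the unique element of $A$ missing at $v$ under $c'$. The computation in the proof of Proposition \ref{prop:Acap2A} applies verbatim to any $(d+1)$-element label set. So for incident edges $vw,vx$ the two neighborhood sums differ by
\[
(a_x-a_w)-2\bigl(c'(vw)-c'(vx)\bigr),
\]
and we must show this is nonzero. If $a_x=a_w$, it equals $-2(c'(vw)-c'(vx))\neq 0$ by properness.

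The key step is the case $a_x\neq a_w$, where we use spacedness. A vertex $u$ has $a_u=1$ exactly when no edge of color $d+1$ (label $1$) is incident to $u$; otherwise $a_u$ is even. Suppose both $a_x$ and $a_w$ were even. Then $x$ lies on an edge $e$ of label $1$ and $w$ lies on an edge $e'$ of label $1$. The path $x,v,w$ has length two and joins an endpoint of $e$ to an endpoint of $e'$, so $e'$ is $2$-reachable from $e$. This contradicts the definition of a spaced coloring. The definition is also violated when $e=e'$, since then $e=xw$ lies on the triangle $xvw$. Hence exactly one of $a_x,a_w$ equals $1$ and the other is even, so $a_x-a_w$ is odd. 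On the other hand $2(c'(vw)-c'(vx))$ is even, so the difference is nonzero. This holds whether or not one of $vw,vx$ itself carries label $1$, because only the parity of $a_x-a_w$ is used.

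Thus $c'$ is a proper additive edge coloring with labels in $\{1,\dots,2d\}$, giving $\eta_p'(G)\le 2d=2\chi'(G)-2$. The main point needing care is the spacedness step: we must check that the definition of $2$-reachability, including the case $e=e'$ lying on a triangle, excludes every configuration in which both endpoints $x,w$ of a $2$-path meet label-$1$ edges. Once that is settled, the parity trick makes the remaining verification immediate.
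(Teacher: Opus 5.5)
Your proposal is correct and is essentially the paper's proof. The paper first recolors the class $d+1$ with $\frac{1}{2}$ and uses spacedness to show that exactly one of $a_x,a_w$ equals $\frac{1}{2}$, so $a_x-a_w\notin\mathbb{Z}$ while $2(f(vw)-f(vx))\in\mathbb{Z}$. It then doubles every label, giving exactly your coloring $c'$ with labels $\{1,2,4,\dots,2d\}$ and your parity argument (odd versus even).
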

\begin{proof} Let $c:E\rightarrow \{1,2,\ldots,d+1\}$ be a spaced $(d+1)$-edge-coloring of $G$.  Let $A=\{1,2,\ldots,d\}\cup \{\frac{1}{2}\}$. Define  $f:E(G)\rightarrow A$ so that $c^{-1}(i)=f^{-1}(i)$ for each $i\in \{1,2,\ldots,d\}$, and $c^{-1}(d+1)=f^{-1}(\frac{1}{2})$ (so $f$ recolors all of the edges colored $d+1$ with $\frac{1}{2}$, instead). For each $v\in V$, define $a_v$ to be the unique element of $A$ that does not appear in $f(\Gamma(v))$. Then, as in the preceding proofs,

\begin{align*}
\sum _{e\in N'(vw)}f(e)-\sum_{e\in N'(vx)}f(e)=& a_x-a_w-2(f(vw)-f(vx)).\\
\end{align*}

 If $a_w=a_x$, then $a_x-a_w-2(f(vw)-f(vx))=-2(f(vw)-f(vx))\neq 0$, because $f$ is a proper edge coloring. If $a_w\neq a_x$, then either $a_w\neq \frac{1}{2}$ or $a_x\neq \frac{1}{2}$, so $\frac{1}{2}$ appears in $f$ at $w$ or at $x$.  By the definition of spaced coloring, $\frac{1}{2}$ cannot appear at both $w$ and $x$ because $w,v,x$ is a path of length two between $w$ and $x$, and hence exactly one of $a_w$ or $a_x$ is $\frac{1}{2}$. But then $a_x-a_w$ is not an integer and $2(f(vw)-f(vx))$ is an integer, so we have that $a_x-a_w-2(f(vw)-f(vx))\neq 0$.

Now, define $g:E(G)\rightarrow 2A$ by $g=2f$. Then for any incident edges $vw$ and $vx$,

$$\sum _{e\in N'(vw)}g(e)-\sum_{e\in N'(vx)}g(e)=2\left(\sum _{e\in N'(vw)}f(e)-\sum_{e\in N'(vx)}f(e)\right)\neq 0,$$ and $g$ is a proper additive edge coloring of $G$ with colors from the set $\{1\}\cup \{2,4,6,\ldots,2d\}$. Thus, $\eta_p'(G)\leq 2d=2\chi'(G)-2$, as desired. 
\end{proof}

 The following definition of the resistance of a graph was given in \cite{MR1630478} and \cite{MR2043807} (called the ``color number" in \cite{MR1630478}) for $3$-regular Vizing-Class-2 graphs, but extends naturally to $d$-regular Vizing-class-2 graphs. 

\begin{defi} Given a $d$-regular Vizing-class-2 graph $G$, define the \textit{resistance} of $G$, denoted $\mathit{r(G)}$, to be the minimum possible size of a color class in a proper ($d+1$)-edge-coloring of $G$. Equivalently,  \[r(G)=\min\{|c^{-1}(d+1)|:c \text{ is a proper $(d+1)$-edge-coloring of } G  \}.\] \end{defi}

\begin{prop} \label{prop:rg1} Suppose $G$ is a $d$-regular Vizing-class-2 graph with $r(G)=1$. If $G$ admits a proper edge coloring $c:E(G)\rightarrow \{1,2,\dots,d+1\}$ such that $c^{-1}(d+1)=\{e\}$ for some edge  $e$ that is not contained in any $3$-cycle of $G$, then $\eta_p(G)\leq 2d$. \end{prop}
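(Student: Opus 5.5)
The plan is to reduce the statement to Theorem \ref{thm:spaced}. (The statement writes $\eta_p(G)$, but in context the intended quantity is $\eta_p'(G)$, and that is what I will bound.) So it suffices to show that the given coloring $c$, with $c^{-1}(d+1)=\{e\}$, is a spaced $(d+1)$-edge-coloring.

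Recall the definition: $c$ is spaced if, whenever $e_1,e_2$ satisfy $c(e_1)=c(e_2)=d+1$, the edge $e_2$ is not $2$-reachable from $e_1$. Here the only edge of color $d+1$ is $e$. So the only pair to check is $e_1=e_2=e$, and the condition becomes: $e$ is not $2$-reachable from itself.

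The key step is the converse of the remark after the definition of $2$-reachability. Suppose $e=uv$ is $2$-reachable from itself. Then there is a path of length two, say $p,q,s$, from an endpoint of $e$ to an endpoint of $e$; since it is a path, $p\neq s$, so $\{p,s\}=\{u,v\}$. Hence $q$ is adjacent to both $u$ and $v$, and $q\notin\{u,v\}$. Then $u,q,v$ together with the edge $uv$ form a $3$-cycle containing $e$. This contradicts the hypothesis that $e$ lies in no $3$-cycle, so $e$ is not $2$-reachable from itself and $c$ is spaced.

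Theorem \ref{thm:spaced} then gives $\eta_p'(G)\le 2d$. The only delicate point is the convention in the definition of a spaced coloring. It quantifies over $e,e'$ with $c(e)=c(e')=d+1$ and does not require them to be distinct, so the case $e=e'$ is exactly the one to handle. That is the case covered by the $3$-cycle argument above.

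If one instead wanted a self-contained argument, one could rerun the proof of Theorem \ref{thm:spaced} with $f$ recoloring $e$ by $\frac12$. For incident edges $vw,vx$ with $a_w\neq a_x$, one needs that $\frac12$ does not appear at both $w$ and $x$. That would require $e=wx$, giving the triangle $v,w,x$ containing $e$, which is again excluded. So this route rests on the same fact.
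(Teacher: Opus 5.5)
Your proof is correct and is the paper's argument. Since $e$ is the only edge of color $d+1$, being spaced reduces to $e$ not being $2$-reachable from itself, which would force a $3$-cycle through $e$, so Theorem~\ref{thm:spaced} gives the bound; you are also right that the intended quantity is $\eta_p'(G)$. One small wording slip: what you prove is the paper's remark itself (self-$2$-reachability implies a $3$-cycle), used in contrapositive form, not its converse.
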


\begin{proof}
  Any such coloring $c$ is a spaced coloring since $e$ is $2$-reachable from itself only if it is in some $3$-cycle.  The result follows from Theorem \ref{thm:spaced}.  
\end{proof}

The \textit{girth} of a graph ${G}$ is the least integer $k$ such that $G$ contains a cycle of length $k$. By Proposition \ref{prop:rg1}, if $G$ is $d$-regular, $r(G)=1$, and $\text{girth}(G)\geq 4$, then $\eta_p'(G)\leq 2d$.   Theorem \ref{thm:girth}, whose proof is the main topic of the remainder of this section, extends this idea to graphs with $r(G)\geq 2$ by showing that $\eta'_p(G)\leq 2d$ for graphs whose girth is large enough relative to $r(G)$.

\begin{thm}\label{thm:girth} If $G$ is a $d$-regular Vizing-class-2 graph with resistance $r\geq 2$ and $\text{girth}(G)\geq 6r-5$, then $\eta_p'(G)\leq 2d=2\chi'(G)-2$.
\end{thm}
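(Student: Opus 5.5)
We reduce to Theorem \ref{thm:spaced}: it suffices to show that $G$ admits a spaced $(d+1)$-edge-coloring. Start from a proper $(d+1)$-edge-coloring $c$ with $|c^{-1}(d+1)|=r$, and among all such colorings choose one minimizing the number of pairs $\{e,e'\}\subseteq c^{-1}(d+1)$ with $e'$ $2$-reachable from $e$. (Since the girth is at least $6r-5\geq 7$, no edge lies in a $3$-cycle, so no edge is $2$-reachable from itself; only distinct pairs matter.) Suppose a bad pair $e=u_1u_2$, $e'=u_3u_4$ exists, joined by a path of length two, say $u_2,y,u_3$. We will recolor locally to get a coloring with $r$ edges of color $d+1$ and strictly fewer bad pairs, contradicting minimality.

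The tool is Kempe chain swaps. Take a color $i\neq d+1$ missing at an endpoint of $e$; since $d+1$ appears at both ends of $e$ and each vertex misses exactly one color, every vertex is missing exactly one color $a_v$. Consider the $(i,d+1)$- or $(a_v,a_w)$-Kempe chains through endpoints of $e$. Interchanging colors along such a chain (a path, since its ends are vertices missing a color) moves the color-$(d+1)$ edge $e$ to another edge while keeping $|c^{-1}(d+1)|=r$. Indeed, $r$ is the minimum, so no swap can decrease the count, and this forces the structure of these chains: for example, every $(a_v,d+1)$-chain starting at an endpoint of $e$ must end at an endpoint of another $(d+1)$-edge region, or the swap would yield fewer than $r$ edges. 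The plan is to pick a chain through $e$ of length at most something like $3r$ and slide $e$ one or two steps along it, so that the new $(d+1)$-edge is far from $e'$.

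The main obstacle is controlling the new bad pairs that the swap may create. Here the girth hypothesis is used. All $r$ edges of color $d+1$ that are pairwise $2$-reachable form, together with connecting paths of length $\le 2$ between them, a structure of total "span" at most about $3(r-1)+1$ edges. A cycle formed by a Kempe chain returning near this structure would have length at most roughly $6r-6$, which the condition $\text{girth}(G)\geq 6r-5$ forbids. So I would argue that, locally within distance about $3r$ of $e$, $G$ is a tree. Then moving $e$ to an edge at distance $\ge 3$ along the chain, away from $e'$, makes it non-$2$-reachable from $e'$ and from every other $(d+1)$-edge that was not already $2$-reachable from it. The exact accounting, namely choosing which chain and which direction so that the count strictly drops, is the step I expect to be delicate. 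I would handle it by considering the component of the "$2$-reachability graph" on $c^{-1}(d+1)$ containing $e$, which has at most $r$ vertices, and treating a leaf of a spanning tree of that component as $e$, so that $e$ has a unique neighbor $e'$ to move away from.

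Once a spaced $(d+1)$-edge-coloring is obtained, Theorem \ref{thm:spaced} gives $\eta_p'(G)\leq 2d=2\chi'(G)-2$.
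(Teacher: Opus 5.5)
Your reduction to Theorem~\ref{thm:spaced} is correct, and so is the idea of minimizing the number of bad pairs. The problem is that the step carrying the proof is missing, and the move you propose for it fails.

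Take $e=u_1u_2$ and $i=a_{u_1}$. The $(i,d+1)$-Kempe chain from $u_1$ is a path $u_1,u_2,u_3,u_4,\dots$ colored $d+1,i,d+1,i,\dots$. Minimality of $r$ forces it to have even length, so its odd-position edges after $e$ are the \emph{other} $(d+1)$-edges. This breaks each version of your move:
\begin{itemize}
\item Sliding $e$ one step (recolor $e$ with $i$ and $u_2u_3$ with $d+1$) puts two $(d+1)$-edges at $u_3$ whenever the chain has length at least $4$.
\item Swapping the whole chain moves every $(d+1)$-edge on it at once.
\item An $(a_v,a_w)$-swap does not move $e$ at all.
\end{itemize}
Even granting a valid move, nothing controls where $e$ lands. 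A position ``away from $e'$'' may be $2$-reachable from a third $(d+1)$-edge. Tree-likeness of the ball around $e$ says nothing about where the other $r-1$ edges sit.

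The leaf trick does not give uniqueness either. If $y$ has neighbors $u,v,w$ and $uu',vv',ww'$ are colored $d+1$, these edges are pairwise $2$-reachable with no cycle in $G$. Every leaf of a spanning tree of that triangle still has two neighbors in it. So the ``delicate accounting'' you flag is the whole proof, and it is not done.

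The missing idea is to slide $e$ along a walk that, at each step, uses the color missing at the trailing vertex, not a fixed pair of colors. Suppose $d+1$ sits on $x_{j-1}x_j$ and $a$ is missing at $x_{j-1}$. Then $a$ must be present at $x_j$, say on $x_jx_{j+1}$; otherwise recoloring $x_{j-1}x_j$ with $a$ leaves fewer than $r$ edges of color $d+1$. Recolor $x_{j-1}x_j$ with $a$ and $x_jx_{j+1}$ with $d+1$.

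Intermediate colorings can fail to be proper only at the two ends of the moving edge, and the minimality argument tolerates this. Since $x_{j+1}\neq x_{j-1}$, the walk never backtracks, so it must close a cycle. By the girth hypothesis that cycle has length at least $6r-5$. The moving edge only ever lands on edges of colors in $\{1,\dots,d\}$, so the other $r-1$ edges of color $d+1$ are off this cycle. Lemma~\ref{lem:girthenough} then gives an edge of the cycle that is not $2$-reachable from any of them. Stopping there gives a proper $(d+1)$-edge-coloring with $r$ edges of color $d+1$ in which $e$ lies in no bad pair.

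So the girth is not used to make the neighborhood of $e$ a tree. It guarantees that the cycle the moving edge is forced to trace contains a safe landing edge. With this step, your bad-pair count strictly drops. Alternatively, process the $(d+1)$-edges one at a time, as the paper does.
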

We will use the following lemma in the proof of Theorem \ref{thm:girth}. 

\begin{lem} \label{lem:girthenough}
   Let $\ell\geq 2$ be an integer, and and let $G$ be a graph with $\text{girth}(G)\geq 6\ell-5$. If $C$ is a cycle in $G$ and $S\subseteq E(G)$ is such that $|S|=\ell-1$ and $S\cap E(C)=\emptyset$, then there is some edge $e$ of $C$ which is not $2$-reachable from any edge in $S$.  
\end{lem}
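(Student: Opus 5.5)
The plan is a counting argument. Each edge of $S$ can only make a short stretch of $C$ $2$-reachable, and $C$ is too long to be covered by $\ell-1$ such stretches.

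\textbf{Local picture around one edge.} Fix $f=xy\in S$. An edge $e'$ of $C$ is $2$-reachable from $f$ exactly when some endpoint of $e'$ is joined to $x$ or $y$ by a path of length $2$. So every such endpoint lies in $N_f$, the set of vertices at distance at most $2$ from $\{x,y\}$. Since $\ell\ge 2$, we have $\text{girth}(G)\ge 7$. Any cycle inside the subgraph induced by $N_f$ would have length at most $2+2+1+2=7$, with equality only in degenerate cases. I will check that $\text{girth}\geq 7$ excludes every such cycle, so that the radius-$2$ neighbourhood $T_f$ of $f$ is a tree; this may need a slightly careful case check.

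Deleting $f$ splits $T_f$ into an $x$-side and a $y$-side, each of depth at most $2$. Because $f\notin E(C)$, any subpath of $C$ lying inside $T_f$ stays on one side. A path in a depth-$2$ rooted tree has at most $5$ vertices. Hence each maximal subpath of $C$ inside $T_f$ (a \emph{visit}) has at most $5$ vertices, and together with its two boundary edges it accounts for at most $6$ edges of $C$ that are $2$-reachable from $f$.

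\textbf{Counting when visits are unique.} Suppose every $f\in S$ produces at most one visit to $C$. Then at most $6(\ell-1)=6\ell-6$ edges of $C$ are $2$-reachable from $S$. But $|E(C)|\ge \text{girth}(G)\ge 6\ell-5$, so some edge of $C$ is free.

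\textbf{Main obstacle: repeated visits.} A single $f$ may create several visits when $C$ is much longer than the girth. I would handle this by induction on $\ell$, which is the step I expect to need the most care.

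If some $f$ has two distinct visits with vertices $u,u'$ in them, join $u$ to $u'$ by the tree path in $T_f$, which has length at most $4$. This path together with either arc of $C$ between the two visits contains a cycle. So each gap arc between consecutive visits of $f$ has length at least $\text{girth}(G)-4\ge 6\ell-9$.

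Pick such a gap arc $A$ that contains no visit of $f$, and restrict attention to the remaining $\ell-2$ edges of $S$. Within a path of $C$ of length less than $\text{girth}(G)-4$, no edge can visit twice, by the same cycle argument. So the single-visit count applies inside $A$, giving at most $6(\ell-2)$ blocked edges. We still need to subtract the at most $6$ edges near the two ends of $A$ that are blocked by $f$ itself, and then compare with the length of $A$.

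The delicate part is the bookkeeping of these boundary edges against the length bound $6\ell-9$. If the constants do not close, I would instead compare against a shortest arc between two visits of the same edge, or apply a minimal-counterexample argument on $\ell$, to recover the missing slack.
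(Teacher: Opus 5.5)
Your single-visit count is correct. The repeated-visit case cannot be completed, though. The gap bound you use is off by one, and at the stated girth the lemma is actually false, so no bookkeeping will close the constants.

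The endpoints $u,u'$ of a gap arc are both at distance exactly $2$ from $\{x,y\}$. When the two visits lie on opposite sides of $f$, the tree path between them passes through $f$ and has length $5$, not $4$. So a gap arc is only guaranteed length $\mathrm{girth}(G)-5$, and this bound is attained.

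Concretely, let $G$ be the $12$-cycle $v_0v_1\cdots v_{11}$ together with the chord $f=v_0v_6$. Its girth is $7=6\cdot 2-5$, so take $\ell=2$, $S=\{f\}$ and $C$ the $12$-cycle. The vertices reached by a path of length two from $v_0$ or $v_6$ are $v_1,v_2,v_4,v_5,v_7,v_8,v_{10},v_{11}$. The remaining vertices $v_0,v_3,v_6,v_9$ are pairwise non-adjacent on $C$, so every edge of $C$ is $2$-reachable from $f$. In your terms, $f$ has two visits, $v_{10}v_{11}v_0v_1v_2$ and $v_4v_5v_6v_7v_8$. Each blocks six edges, and they are separated by gap arcs of length $2$.

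The same happens for $\ell=3$. The $24$-cycle with chords $v_0v_{12}$ and $v_6v_{18}$ has girth $13=6\cdot 3-5$, and every cycle edge is $2$-reachable from one of the two chords.

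The paper's proof does not avoid this. Its base case $\ell=2$ relies on the assertion that any cycle of length at least $7$ in $T(e^*)$ has length exactly $7$ and contains $e^*$. In the example above, $T(e^*)$ is all of $G$ and contains the $12$-cycle, because edges reached from different branches around $e^*$ can join up. The induction on $\ell$ ultimately rests on that base case.

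Your local-counting idea does prove the statement under a slightly stronger hypothesis, for example $\mathrm{girth}(G)\ge 6\ell+1$. Take any $6\ell-5$ consecutive edges of $C$ as a window. Using the corrected gap bound, two different stretches of the same $f\in S$ are separated by at least $\mathrm{girth}(G)-7$ edges not blocked by $f$. So both stretches can meet the window only if it has at least $\mathrm{girth}(G)-5>6\ell-5$ edges. Hence each $f$ blocks at most $6$ edges of the window, and $6(\ell-1)<6\ell-5$ leaves a free edge. Theorem~\ref{thm:girth} applies the lemma with $\ell=r$, so its girth hypothesis would need the same adjustment.
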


\begin{proof} 

For the sake of contradiction, suppose that there is some least $\ell\geq 2$ for which there is a counterexample $G$ with girth $g\geq 6\ell-5$. Then $G$ contains some cycle $C$ and some $S\subseteq E(G)$ with $|S|=\ell-1$ and $S\cap E(C)=\emptyset$, where every edge of $C$ is $2$-reachable by some edge in $S$.  

Pick $e^*\in S$ and define  \[R(e^*)= \{e^*\}\cup \{e\in E(G):e \text{ is $2$-reachable from } e^*\}.\]

Define $T(e^*)=G[(R(e^*)]$, the subgraph of $G$ induced by $R(e^*)$.  Note that  $R(e^*)$ is composed of the edge $e^*$ and the edges of all possible paths of length $\leq 3$ in $G$ that begin at either endpoint of $e^*$. Thus,  $T(e^*)$ is connected, a longest path in $T(e^*)$ has length at most $7$, and any path of length $7$ in $T(e^*)$ contains the edge $e^*$. Furthermore, if $T(e^*)$ has any cycle of length $\geq 7$, then the length of the cycle is exactly $7$ and the cycle must include the edge $e^*$ .

We will first show that it is not the case that $\ell=2$: If $\ell=2$, then $|S|=1$ and therefore $S=\{e^*\}$. Since $C$ is a cycle of length at least $ 6(2)-5=7$ that does not contain the edge $e^*$, the edges of $C$ cannot form a cycle in $ T(e^*)$ and there is some edge of $C$ that is not $2$-reachable from $e^*$. Thus, any least counterexample must have $\ell\geq 3$ and $g\geq 6\ell-5 \geq 13$. In this case,  $T(e^*)$ cannot contain any cycles and is therefore a tree. 

\begin{figure}[H]
    \centering

 \includegraphics[scale=.45]{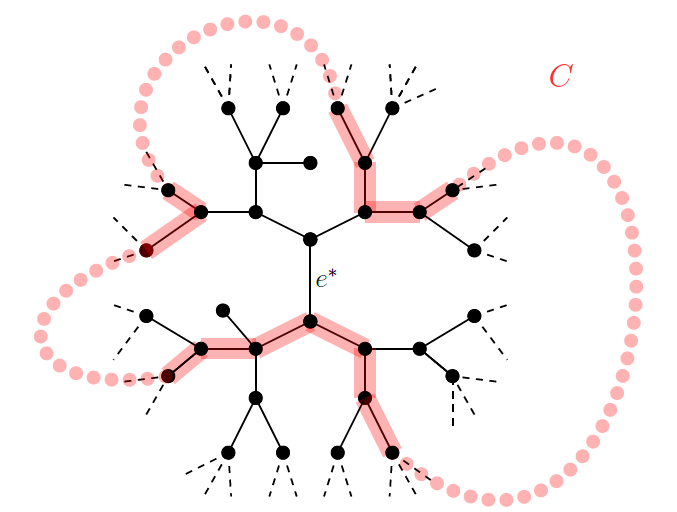}
\caption{A depiction of the tree $T(e^*)$ and the cycle $C$. The highlighted edges $E(C)\cap E(T(e^*))$  are contracted to form $G'$. }
    \label{fig:cycle C}
\end{figure}

 Form a new graph $G'$ from $G$ by contracting the edges in $E(T(e^*))\cap E(C)$.  Denote by $C'$ the cycle that results from $C$ after contracting these edges. By assumption, every edge of $C$ that is not $2$-reachable from $e^*$ in $G$ must be $2$-reachable from some $e\in S\setminus \{e^*\}$ in $G$, and since contracting edges cannot increase the distance between any pair of non-contracted edges,
 every edge in $C'$ is  $2$-reachable in $G'$ by some $e\in S'=S\setminus \{e^*\}$. Note also that $S'\subseteq E(G')$ because none of the edges in $S'$ are contracted to form $G'$, and note that $|S'|=\ell-2$.

 We show that $\text{girth}(G')\geq 6(\ell-1)-5$. Since $\text{girth}(G) \geq 6\ell-5$, if there is some cycle $D'$ of $G'$ whose length is less than $6(\ell-1)-5$, $D'$ must be the result of contracting more than $6$ edges of some cycle $D$ of $G$. The intersection of the set of edges of a tree with the set of edges of a cycle always forms a (possibly empty) set of pairwise disjoint subpaths of the cycle, and thus $E(T(e^*))\cap E(D)$ can be written as $E(P_1)\cup E(P_2)\cup\cdots\cup E(P_k)$, where $P_i$ is a subpath of $D$ for each $i\in\{1,2,\dots,k\}$ and $V(P_i)\cap V(P_j)=\emptyset$ for every pair $i,j\in \{1,2,\dots,k\}$ with $i\neq j$. The edges in $E(D)\setminus E(T(e^*))$ therefore also form a set of $k$ pairwise disjoint subpaths of $D$; $E(D)\setminus E(T(e^*))=E(Q_1)\cup E(Q_2)\cup \cdots \cup E(Q_k)$,  where the cycle $D$ is given by $P_1,Q_1,P_2,Q_2,\dots,P_k,Q_k$ and for each $i\in\{1,2,\dots,k\}$, $Q_i$ is a subpath of $D$ that does not contain any edges of $T(e^*)$. Note that both endpoints of each $Q_i$ are in $V(T(e^*))$.
 
\begin{figure}[H]
    \centering
\includegraphics[scale=.47]{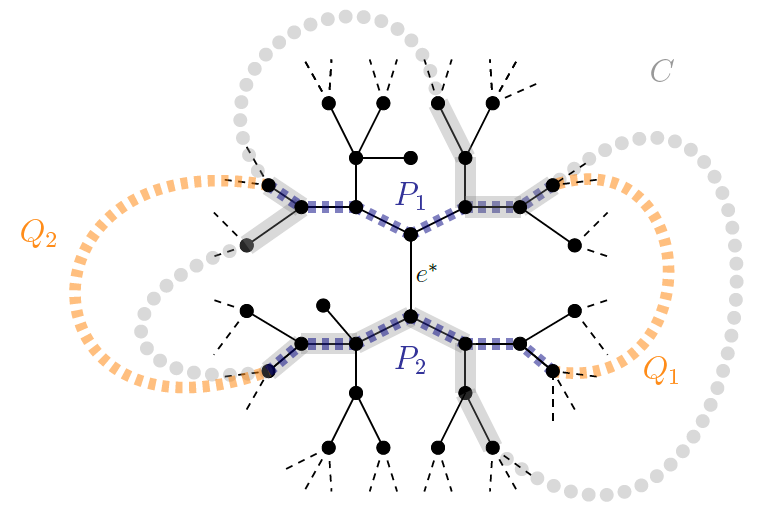}
\caption{A depiction of a cycle $D$ given by the paths $P_1,Q_1,P_2,Q_2$. At most $6$ edges in each $P_i$ are contracted to form the cycle $D'$ in $G'$ and each $Q_i$ has length at least $6(\ell-2)$. }
    \label{fig:cycle D}
\end{figure}

  The edges of $D$ that are contracted to form $D'$ each lie on some $P_i$, and because we do not contract $e^*$ when we form $G'$, at most $6$ edges in each $P_i$ are contracted to obtain $D'$. Therefore, since we have established that more than $6$ edges of $D$ are contracted to form $D'$, it must be the case that $k\geq 2$; $D$ contains at least two maximal vertex-disjoint subpaths of $T(e^*)$.

Fix $i\in \{1,2,\dots,k\}$, and let $x$ and $y$ denote the endpoints of $Q_i$. Then $x,y\in V(T(e^*))$, so there is a unique path $P$ in $T(e^*)$ from $x$ to $y$. Thus, $P$ and $Q_i$ together form a cycle in $G$, and since $P$ has length at most $7$ and a smallest cycle in $G$ has length at most $6\ell-5$, the length of $Q_i$ is at least $6\ell-5-7=6(\ell-2)$. Since none of the edges of any $Q_i$ are contracted when we form $G'$, $D'$ has length at least $k\cdot 6(\ell-2)\geq 2\cdot 6(\ell-2) =12(\ell-2)$. Since $\ell \geq 3$, $12(\ell-2)\geq 6(\ell-1)-5$, and  $D'$ has length at least $ 6(\ell-1)-5$. Thus, there are no cycles $D'$ of length less than $6(\ell-1)-5$ in $G'$, and we conclude that $\text{girth}(G')\geq 6(\ell-1)-5$.

Therefore, $G'$ is a graph with girth at least $6(\ell-1)-5$ that contains a cycle $C'$ and  set of edges $S'=S\setminus \{e^*\}\subseteq E(G')$ such that $E(C')\cap E(S')= \emptyset$, $|S'|=(\ell-1)-1$, and every edge of $C'$ is $2$-reachable from some $e\in E(S')$. This contradicts the minimality of $\ell$. 

\end{proof}

  In the following proof of Theorem \ref{thm:girth}, it will be useful to distinguish between the labels of different edges that are colored $d+1$. To do this, we will consider colorings of the form \[f:E\rightarrow \{1,2,\dots,d,(d+1,1),(d+1,2),\dots,(d+1,\ell)\}\] and let $\widetilde{f}$ denote the $(d+1)$-edge-coloring obtained by replacing each edge-label of the form $(d+1,i)$ in $f$ with $d+1$ (making the $(d+1)$-edge-labels indistinguishable again in $\widetilde{f}$). 

\begin{proof}[Proof of Theorem \ref{thm:girth}]
Suppose $G=(V,E)$ is a $d$-regular Vizing-class-2 graph with $r(G)=r\geq 2$ and $g\geq 6r-5$. Let $c:E(G)\rightarrow \{1,2,\dots, d+1\}$ be a proper edge coloring of $G$ with $|c^{-1}(d+1)|=r$. We give a recoloring procedure that produces a spaced $(d+1)$-coloring of $G$ so that Theorem \ref{thm:spaced} applies.

Let $e_1,e_2,\dots, e_r$ be the $r$ edges that are  colored $d+1$ in $c$ and define the labeling \[c_0:E\rightarrow \{1,2,\dots,d,(d+1,1),(d+1,2),\dots,(d+1,r)\}\] by \[c_0(e)= \begin{cases}
    c(e)\quad& \text{ if } e\in c^{-1}(\{1,2,\dots,d\}) \\
    (d+1,i)\quad& \text{ if } e=e_i \text{ for some } i\in\{1,2,\dots,r\}
\end{cases} \] so that $c_0$ distinguishes the $r$ labels of the edges labeled with the color $d+1$, and $\widetilde{c_0}=c$. Our general recoloring procedure is as follows: for each $i\in\{1,2,\dots,r\}$, we will  (recursively) construct a proper edge coloring $c_i:E\rightarrow \{1,2,\dots,d,(d+1,1),(d+1,2),\dots,(d+1,r)\}$ such that $c_i$ satisfies the following property, $P_i$. \vspace{.3cm} 

\noindent \textit{Property $P_i$}: $\widetilde{c_i}$ is a proper $(d+1)$-edge-coloring of $G$ and if $e,e'\in E$ with $c_i(e)=(d+1,j)$ and $c_i(e')=(d+1,k)$ and $j,k\leq i$, then $e'$ is not $2$-reachable from $e$.  \vspace{.3cm}

Observe that if $c_r$ satisfies property $P_r$,  $\widetilde{c_r}$ will be a \textit{spaced} $(d+1)$-edge-coloring of $G$, so Theorem \ref{thm:spaced} will apply.

For the $i=1$ case, setting $c_1=c_0$, we see that $c_1$ satisfies property $P_1$ because $\widetilde{c_0}$ is a proper $(d+1)$-coloring of $G$ and the existence of a path of length two between the endpoints of the single edge whose label is $(d+1,1)$ would imply that $G$ contains a $3$-cycle, contradicting the fact that $\text{girth}(G)\geq 6r-5\geq 7$.

Now suppose that $2\leq i<r$ and that $c_{i-1}:E\rightarrow \{1,2,\dots,d,(d+1,1),(d+1,2),\dots,(d+1,r)\}$ satisfies property $P_{i-1}$. If $c_{i-1}$ also satisfies property $P_i$, then set $c_i=c_{i-1}$ and we are done. If not, we give a procedure that moves the label $(d+1,i)$ along the edges of a walk $W$ in $G$ and recolors the other edges on $W$ appropriately so that we eventually end up with a labeling that satisfies property $P_i$. We will let $W_{j}=x_0,x_1,\dots,x_j,x_{j+1}$ denote the first $j$ steps of this walk, produce a labeling $c_{i-1,j}:E\rightarrow \{1,2,\dots,d,(d+1,1),(d+1,2),\dots,(d+1,r)\}$ for each $j$, and show that eventually, for some $q\in \mathbb{N}$, $c_{i-1,q}$ must satisfies property $P_i$. (Note that Example \ref{exam:recoloring} also illustrates the recoloring process.)

 Let $h$ denote the (unique) edge that is colored $(d+1,i)$ in $c_{i-1}$, and set $x_0$ and $x_1$ to be the two endpoints of $h$, so that $W_0=x_0,x_1$. Because $G$ is $d$-regular and $\widetilde{c_{i-1}}$ is a $(d+1)$-edge-coloring with
 $\widetilde{c_{i-1}}(e_i)=d+1$, some color $a_{x_0}\in \{1,2,\dots,d\}$ is missing at $x_0$ in $c_{i-1}$. Note that $a_{x_0}$ cannot also be missing at $x_1$, since if that were the case, we could change the color of $x_0x_1$ in $c_{i-1}$ to $a_{x_0}$ to get a proper coloring with fewer than $r$ edges colored $d+1$, contradicting the minimality of $r$. Hence, there is an edge $x_1x_2$, with $x_2\neq x_0$, such that $c_{i-1}(x_1x_2)=a_{x_0}$. Recolor the edges with $c_{i-1,1}$ defined by \[ c_{i-1,1}(e)=
 \begin{cases} c_{i-1}(e) \quad & \text{ if } e\notin \{x_0x_1,x_1x_2\}\\
a_{x_0} \quad &  \text{ if } e= x_0x_1\\
(d+1,i) \quad &  \text{ if } e= x_1x_2\\
 \end{cases}\]

 and add $x_2$ to the walk, so that $W_1=x_0,x_1,x_2$. Since we recolored $x_0x_1$ with $a_{x_0}$, no color appears twice at $x_0$. However, it is not necessarily the case that $\widetilde{c_{i-1,1}}$ is a proper $d+1$ coloring of $G$, since there could exist some $y\in V$ and some $j\neq i$ such  that $c_{i-1,1}(x_2y)=(d+1,j)$. This will not cause issues; we will continue moving the label $(d+1,i)$ along our walk until property $P_i$ is eventually  satisfied. We take the time to construct $c_{i-1,2}$ before giving the recursive construction of $c_{i-1,j}$  for $j\geq 3$ because the $c_{i-1,2}$ step demonstrates all of the nuances involved in the subsequent steps.

 Because $a_{x_0}$ and $(d+1,i)$ now both appear at $x_1$, it must be the case that there is some $a_{x_1}\in\{1,2,\dots,d\}\setminus\{a_{x_0}\}$ that does not appear at $x_1$.  Note that $a_{x_1}$ must appear at $x_2$, otherwise changing the color of $x_1x_2$ in $c_{i-1,1}$ to $a_{x_1}$ would give a proper coloring with fewer than $r$ edges colored $d+1$. Hence, there is an edge $x_2x_3$, with $x_3\neq x_1$, such that $c_{i-1,1}(x_2x_3)=a_{x_1}$.  Recolor the edges with $c_{i-1,2}$ defined by

\[ c_{i-1,2}(e)=
\begin{cases} c_{i-1,1}(e) \quad & \text{ if } e\notin \{x_1x_2,x_2x_3\}\\
a_{x_1} \quad &  \text{ if } e= x_1x_2\\
(d+1,i) \quad &  \text{ if } e= x_2x_3\\
 \end{cases}\] 

and set $W_2=x_1,x_2,x_3$.

Since $x_1x_2$ is colored with $a_{x_1}$ and $a_{x_1}$ was not already present at $x_1$ in $ c_{i-1,2}$, no color now appears more than once at $x_1$. It could be at this point that $\widetilde{c_{i-1,2}}$ is not a proper $(d+1)$-edge-coloring of $G$, since there could exist some $y\in V$ and $j\neq i$ such that $c_{i-1,2}(x_2y)=(d+1,j)$ or $c_{i-1,2}(x_3y)=(d+1,j)$ with $j\neq i$. Note, however, that every vertex except possibly $x_2$ and $x_3$ has $d$ distinct colors on its incident edges in $\widetilde{c_{i-1,2}}$, and $d+1$ is the only color that might possibly be repeated at $x_2$ or $x_3$ and it can appear at most twice at each of $x_2$ and $x_3$.

Continue recursively with this same process, defining $c_{i-1,j}$  for each $j\geq 3$ by selecting $a_{x_{j-1}}\in \{1,2,\dots,d\}\setminus\{a_{x_{j-2}}\}$ so that $a_{x_{j-1}}$ does not appear at vertex $x_{j-1}$ in $c_{i-1,j-1}$ and selecting $x_{j+1}\in N(x_j)$ so that $x_{j+1}\neq x_{j-1}$ and $c_{i-1,j-1}(x_jx_{j+1})=a_{x_{j-1}}$. (Such an $a_{x_{j-1}}$ must exist since $(d+1,i)$ and $a_{x_{j-2}}$ are both present at $a_{x_{j-1}}$, and such an $x_{j+1}$ must always exist by minimality of $r$). Then define $c_{i-1,j}$ by

\[ c_{i-1,j}(e)=
 \begin{cases} c_{i-1,j-1}(e) \quad & \text{ if } e\notin \{x_{j-1}x_{j},x_jx_{j+1}\}\\
a_{x_{j-1}} \quad &  \text{ if } e= x_{j-1}x_j\\
(d+1,i) \quad &  \text{ if } e= x_jx_{j+1}\\
 \end{cases}.\]
 
and set $W_{j}=x_1,x_2,\dots,x_{j+1}$. By the same reasoning as in the discussion about $c_{i-1,2}$, every vertex except possibly $x_j$ and $x_{j+1}$ has $d$ distinct colors on its incident edges in $\widetilde{c_{i-1,j}}$, and $d+1$ is the only color that might possibly be repeated at $x_j$ or $x_{j+1}$, and it can appear at most twice at each of $x_j$ and $x_{j+1}$. 
 
In each walk $W_j$, the labeling $c_{i-1,j}$ colors the edge $x_jx_{j+1}$ with $(d+1,i)$ and all other edges in $W_j$ with an element of $\{1,2,\dots,d\}$.  Furthermore, for each walk $W_j$,  $x_{k-1}\neq x_{k+1}$ for all $k\in\{1,2,\dots,j\}$, and hence  $x_{k-1}x_{k}\neq x_{k}x_{k+1}$ for each $k$, and $W_j$ never ``backtracks."  Thus, since $G$ is finite,  there is some least $m$ such that $x_{m+1}=x_\ell$ for some  $ \ell <m+1$, and $C$ given by $x_\ell,x_{\ell+1},\dots, x_{m-1},x_m,x_{m+1}=x_{\ell}$ is a cycle in $G$. By our assumption on the girth of $G$, $C$ has length at least $6r-5$.

In the recoloring process, $(d+1,i)$ travels along the walk $W_m$, always moving to an edge previously colored with a color in $\{1,2,\dots,d\}$ and its previous edge is always colored with a color in $\{1,2,\dots,d\}$, so for each $j\in\{\ell,\ell+1,\dots,m-1,m\}$, $c_{i-1,j}$ assigns the label $(d+1,i)$ to exactly one edge of $C$ and all other edges of $C$ are assigned labels from $\{1,2,\dots,d\}$.

Let $S=\{e\in E: c_{i-1}(e)=(d+1,n) \text{ for some } n\neq i\}$. Then $|S|=r-1$, and since $G$ has girth at least $6r-5$, there exists some edge $x_{q}x_{q+1}$ in $C$ that is not $2$-reachable from any edge in $S$, by Lemma \ref{lem:girthenough}. This also guarantees that no edges in $S$ are incident to $x_qx_{q+1}$ (since such an edge would be $2$-reachable), and thus $\widetilde{c_{i-1,q}}$ is a proper $(d+1)$-edge-coloring of $G$. For each $e$ not on $W_q$, $c_{i-1,q}(e)=c_{i-1}(e)$ , so $(d+1,k)$ is not $2$-reachable from $(d+1,j)$ for any $j,k<i$. Hence, setting $c_{i}=c_{i-1,q}$, $c_i$ satisfies property $P_i$, as desired. 

Finally, since $c_r$ satisfies property $P_r$, $\widetilde{c_r}$ is a spaced $(d+1)$-edge-coloring of $G$, and by Theorem \ref{thm:spaced}, $\eta'(G)\leq 2d$.

\end{proof}
\newpage

\begin{exam}\label{exam:recoloring} Even though the Petersen graph does not have large enough girth relative to its resistance to satisfy the hypothesis of Theorem \ref{thm:girth}, we use it to demonstrate the recoloring procedure in the preceding proof (this is easier than dealing with a large graph). Let $G$ denote the Petersen graph, as shown below. Starting with a proper edge coloring $c:E(G)\rightarrow \{1,2,3,4\}$ with $|c^{-1}(4)|=2=r(G)$, we exhibit the colorings $c_1$, $c_{1,1}$, $c_{1,2}$, $c_{1,3}$ and the corresponding walks $W_0,W_1,W_2,$ and $W_3$ to illustrate the process described in the preceding proof.

\begin{figure}[htbp]
    \centering
\includegraphics[scale=.55]{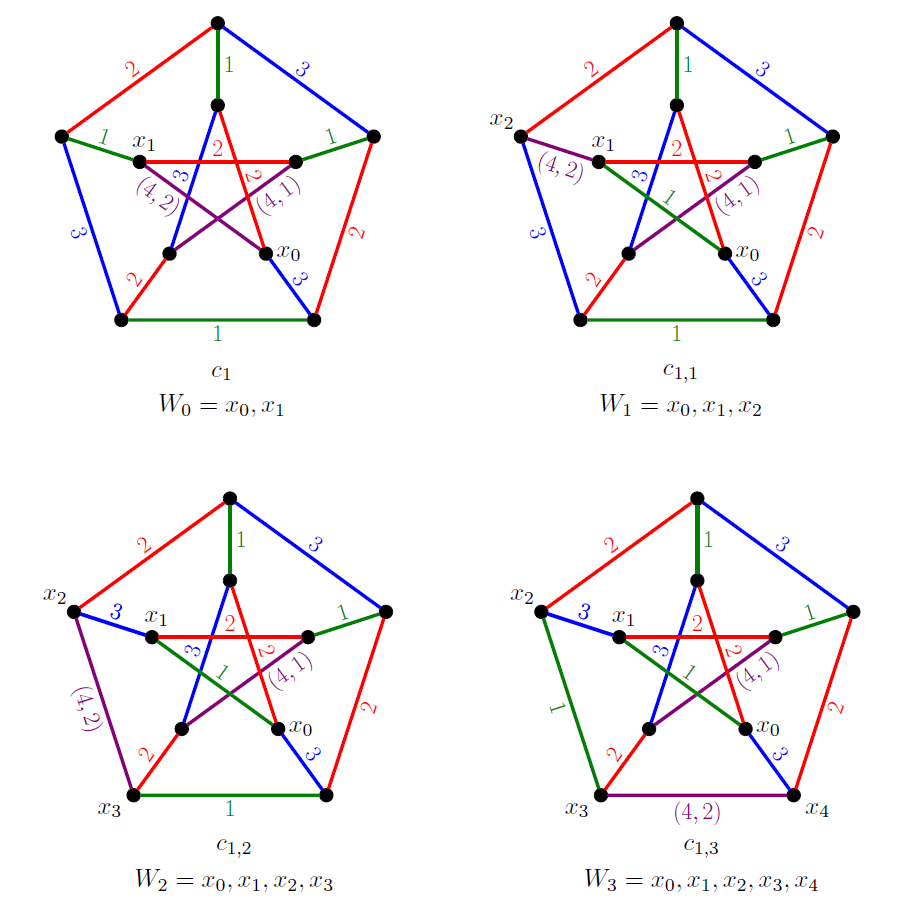} 

\caption{The recoloring procedure from Theorem \ref{thm:girth}.}
    \label{fig:recoloring}
\end{figure}
The edge label $(4,2)$ travels along a walk that eventually must contain a cycle, and that cycle will not contain the edge labeled $(4,1)$. In graphs where the girth and resistance restrictions of Theorem \ref{thm:girth} are met (the restrictions are not met in this example), such a cycle must contain an edge that is not $2$-reachable from the edge labeled $(4,1)$ and the recoloring that assigns the label $(4,2)$ to this edge will satisfy property $P_2.$

\end{exam}

In  \cite{MR2097332}, it was shown that for each $d\geq 3$ and each $g\in \mathbb{N}$, the probability that a random $d$-regular graph on $n$ vertices has girth at least $g$ tends to a positive constant as $n\rightarrow \infty$. Furthermore, if $G$ is a $d$-regular graph with $d$ even and $|V(G)|$ odd, then $G$ is necessarily Vizing-Class-2 because $d$ matchings, each of size at most $\lfloor \frac{|V(G)|}{2}\rfloor<\frac{|V(G)|}{2}$, cannot cover all of the $\frac{d|V(G)|}{2}$ edges of $G$. Thus,  for each even $d\geq 2$, there are $d$-regular Vizing-Class-2 graphs with arbitrarily large girth. To use Theorem \ref{thm:girth}, however, we need graphs that not only have large girth, but also small resistance. The following proposition gives infinite classes of graphs to which Theorem \ref{thm:girth} applies. 

\begin{prop}  Let $d\geq 4$ be an even integer. For any  $g\in \mathbb{N}$, there exists a $d$-regular graph with resistance $\frac{d}{2}$ and girth at least $g$.
\end{prop}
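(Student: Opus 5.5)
The plan is to prove the two inequalities $r(G)\ge \frac d2$ and $r(G)\le \frac d2$ separately. The lower bound will hold automatically for any $d$-regular graph of odd order. For the upper bound I will build a large-girth example with an explicit $(d+1)$-edge-coloring whose last color class has exactly $\frac d2$ edges.

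\emph{Lower bound.} Suppose $G$ is $d$-regular with $|V(G)|=n$ odd. As observed before the proposition, $G$ is then Vizing-class-2. In any proper $(d+1)$-edge-coloring every color class is a matching, so it has at most $\frac{n-1}{2}$ edges. The $d$ largest classes therefore cover at most $\frac{d(n-1)}{2}$ of the $\frac{dn}{2}$ edges. Hence the smallest class has at least $\frac{dn}{2}-\frac{d(n-1)}{2}=\frac d2$ edges, so $r(G)\ge \frac d2$.

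\emph{Construction.} Take a $d$-regular bipartite graph $H$ with girth at least $g+2$ and with very many vertices. For instance, take the bipartite double cover of a large $d$-regular graph of girth at least $g+2$ (Erdős--Sachs); the cover is bipartite and its girth is no smaller. By K\"onig's theorem $H$ has a proper $d$-edge-coloring $c$ with colors $1,\dots,d$. For each $j\in\{1,\dots,\frac d2\}$ choose an edge $u_jv_j$ with $c(u_jv_j)=j$. Choose these edges, together with their neighborhoods, pairwise at distance at least $g+2$ in $H$; this is possible because $H$ is large and has bounded degree.

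Form $G$ from $H$ by deleting the $\frac d2$ edges $u_jv_j$ and adding a new vertex $z$ adjacent to all $u_j$ and $v_j$. Then $G$ is $d$-regular and $|V(G)|=|V(H)|+1$ is odd. Now recolor as follows.
\begin{itemize}
\item Color $zu_j$ with $j$; this is legal because $u_j$ lost its color-$j$ edge.
\item Let $v_jw_j$ be the edge at $v_j$ with color $\frac d2+j$. Recolor $v_jw_j$ with $d+1$, and color $zv_j$ with $\frac d2+j$.
\end{itemize}
At $v_j$ the colors $j$ and $\frac d2+j$ were freed and $d+1$ was added, so $v_j$ sees distinct colors. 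At $w_j$ the color $\frac d2+j$ was replaced by $d+1$, which is new there. At $z$ the colors are exactly $1,\dots,d$. Because the chosen edges are far apart, the edges $v_jw_j$ form a matching, so color $d+1$ is proper. This gives a proper $(d+1)$-edge-coloring with $|c^{-1}(d+1)|=\frac d2$, so $r(G)\le\frac d2$.

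\emph{Girth, the main obstacle.} Cycles avoiding $z$ lie in $H$, so they have length at least $g+2$. A cycle through $z$ consists of two $z$-edges and a path in $H-\{u_jv_j\}$ between two of the vertices $u_1,v_1,\dots,u_{d/2},v_{d/2}$.
\begin{itemize}
\item If the endpoints are $u_j$ and $v_j$, the path together with the deleted edge $u_jv_j$ forms a cycle of $H$. So the path has length at least $g+1$.
\item If the endpoints come from different pairs, the path has length at least their distance in $H$, which is at least $g+2$ by the spacing choice. Deleting edges only increases distances, so this bound still holds in $H-\{u_jv_j\}$.
\end{itemize}
In either case the cycle has length at least $g$. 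The only care needed is to fix the spacing requirement in terms of distances in $H$ before deleting edges, so that both the matching property of color $d+1$ and these girth bounds hold.
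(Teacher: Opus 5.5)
Your proof is correct and is essentially the paper's construction. The paper takes $\frac d2$ disjoint copies of a large-girth $d$-regular bipartite graph, subdivides one edge in each copy, and identifies the subdivision vertices. That is your ``delete $\frac d2$ edges and join their endpoints to a new vertex $z$'' with the deleted edges in different components, so its girth check is immediate where yours needs the spacing condition. The paper also uses the same move of shifting color $d+1$ onto an edge $y_iw_i$, followed by a permutation of colors in each copy, where you instead pair colors $j$ and $\frac d2+j$.

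The one real difference is the lower bound. The paper argues that each subdivided copy $H_i'$ is not $d$-edge-colorable, so each copy must contain an edge of color $d+1$. Your global count on the odd-order graph $G$ gives $r(G)\ge\frac d2$ directly, and it would apply equally to the paper's graph, which also has odd order.
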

\begin{proof}
   Let $d\geq 4$ be even, and $g\in \mathbb{N}$. In \cite{MR2097332}, it was shown that the probability that a random $d$-regular bipartite graph $G$ has girth greater than $g$ tends to a positive constant as $|V(G)|\rightarrow \infty$.   Thus, there exists a $d$-regular bipartite graph $H$ with $\text{girth}(H)\geq g$. (See also the explicit construction in the case that $d$ is a power of $2$ in \cite{MR1339092}). Since bipartite graphs are Vizing-Class-1 (\cite{MR1511872}), there is a proper coloring $c:E(H)\rightarrow \{1,2,\dots,d\}$ of $H$. 

Subdivide an edge $xy$ of $H$, replacing $xy$ with edges $xv$ and $vy$, where $v$ is a new vertex, and  call the resulting graph $H'$. Then $H'$ has an odd number of vertices and any color class in an edge coloring of $H'$ contains at most $\lfloor\frac{|V(H')|}{2}\rfloor=\lfloor\frac{|V(H)|+1}{2}\rfloor=\frac{|V(H)|}{2}$ edges. Thus, $d$ color classes contain at most $d\frac{|V(H)|}{2}=|E(H')|-1$ edges, and $H'$ must be Vizing-Class-2.  Let $H'_1,H'_2,\dots,H'_{\frac{d}{2}}$  be $\frac{d}{2}$ disjoint copies of $H'$, and for each $i\in\{1,2,\dots,\frac{d}{2}\}$ let $x_i, y_i,$ and $v_i$ denote the copies of the vertices $x,y,$ and $v$ in $H_i$, respectively. 

\begin{figure}[H]
    \centering
\includegraphics[scale=.43]{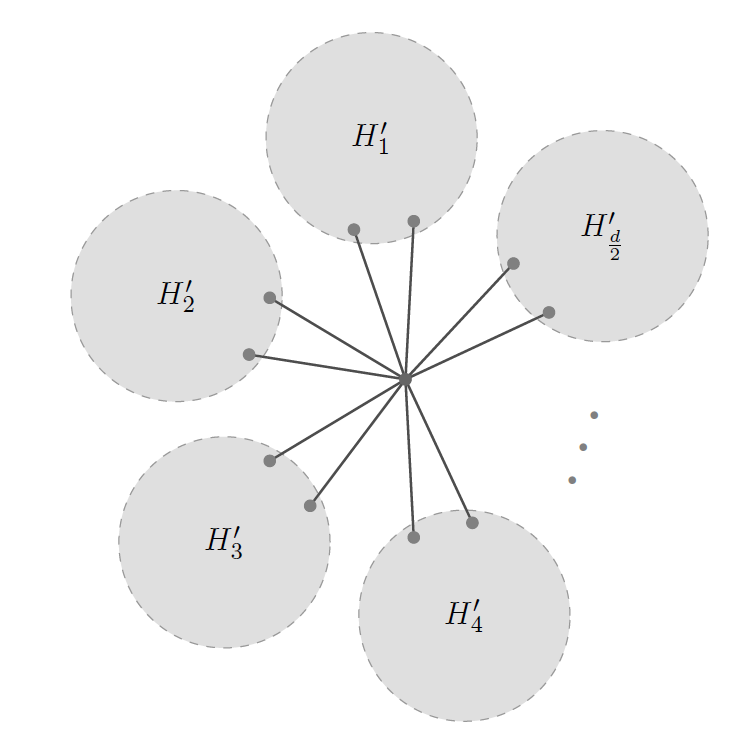}

\caption{The construction of a $d$-regular graph $G$ with large girth and $r(G)=\frac{d}{2}$.}
    \label{fig:subdivide}
\end{figure}

Construct a new $d$-regular graph $G$ by identifying the vertices $v_i$, $i=1,2,\dots,\frac{d}{2}$. This identification does not create any new cycles, so  $\text{girth}(G)\geq g$. Since any $d+1$-edge-coloring of $G$ must have at least one edge of each $H_i'$ colored with $d+1$, we see that $r(G)\geq \frac{d}{2}$. We exhibit a proper $d+1$-edge-coloring of $G$ with $c^{-1}(d+1)=\frac{d}{2}$.  First, properly color each $H_i'$ so that the edges not incident to $v_i$ are colored the same as they are colored in $c:E(H)\rightarrow \{1,2,\dots, d\}$, and color $x_iv_i$ with $c(xy)$ and $v_iy_i$ with $d+1$. Then there is some color $j\neq c(xy)$ that appears on an edge $y_iw_i$ in $H_i'$, where $w_i\in N_{H_i'}(y_i)$ and $w_i\neq x_i$. Swapping the colors of $v_iy_i$ and $y_iw_i$ still yields a proper $d+1$-edge-coloring of each $H_i'$, with the property that $d+1$ no longer appears at any of the vertices $v_i$ that were added in the subdivision step. Finally, for each $H_i'$, permute the color classes  (while leaving the $d+1$ color class fixed) so that the two colors incident to $v_i$ are the colors $2i-1$ and $2i$. Coloring the edges of each $H_i'$ in this way produces a proper $(d+1)$-edge-coloring of $G$ with $r(G)=\frac{d}{2}$ edges colored with $d+1$.

\end{proof}

\section{Concluding Remarks}
The reader may have noticed that the proofs of Theorems \ref{thm:class1add} and \ref{thm:addbound} differ from those of the other results in this work in that they provide a set $A\subseteq \mathbb{N}$ such that \textit{every} proper edge coloring $c:E(G)\rightarrow A$ is an additive edge coloring, as opposed to \textit{at least one} such proper coloring existing. In light of this, define the \textit{strong proper additive chromatic number} of a graph $G$, denoted $\eta_{sp}(G)$, to be the least $k\in \mathbb{N}$ such that there exists some $A\subseteq  \{1,2,\dots, k\}$ with the property that $G$ can be properly colored using the elements of $A$ and every proper coloring of $G$ using colors from $A$ is also an additive coloring. 

Define the \textit{strong proper additive chromatic index}, $\eta_{sp}'(G)$, analogously for edge colorings. Then Theorems \ref{thm:class1add} and \ref{thm:addbound} also hold true if we replace $\eta_p'(G)$ with $\eta_{sp}'(G)$ in their statements.  One course of future research could be to determine precise relationships between $\eta_p(G)$ and $\eta_{sp}(G)$ or between $\eta_p'(G)$ and $\eta_{sp}'(G)$ for other classes of graphs.  More specifically, one might consider the following question.

\begin{ques} \label{ques:p=sp}For which graphs G (other than line graphs of $d$-regular Vizing-class-1 graphs), does $\eta_p(G)=\eta_{sp}(G)$?
\end{ques}

Additionally, a question that arises naturally from the results in this work is the following.

\begin{ques}\label{ques:strengthening}
    Is it true that if $G$ is a $d$-regular graph, then $\eta_P'(G)=\chi'(G)$?
\end{ques}

 An affirmative answer to Question \ref{ques:strengthening} would be a strengthening of the theorems in this work and would also be a strengthening of the additive coloring conjecture for the class of line graphs of $d$-regular graphs.
 
 \section*{Acknowledgements} 
The author thanks the members of ACRONYM, Truman State University's advanced combinatorics research organization, for feedback on some of the details of the proofs in this work. In particular, thanks are due to Anastasia Halfpap, Wayne Johnson, and Stephen Lacina.

\bibliographystyle{plain}
\bibliography{references}
\vspace{1cm}

Department of Mathematics, Truman State University, Kirksville, Missouri\

igossett@truman.edu\

\end{document}